\documentclass[12pt,letterpaper,final,twoside,leqno]{amsart}
\usepackage{amsmath,amssymb,amsthm,amsfonts,amscd,amsopn}
\usepackage{hyperref}
\usepackage[usenames]{color}
\usepackage{eucal, mathrsfs}
\usepackage{xspace}
\usepackage[all]{xy}\xyoption{dvips}
\usepackage{comment} 
\usepackage{paralist}
\usepackage{stmaryrd}
\usepackage{enumitem}
\usepackage{supertabular}
\binoppenalty=10000
\relpenalty=10000

\newcounter{are-there-sections}
\setcounter{are-there-sections}{1}


%

\def\me{S\'ANDOR J KOV\'ACS\xspace}

\def\mythanks{Supported in part by NSF Grant 
 DMS-0856185, and the
Craig McKibben and Sarah Merner Endowed Professorship in Mathematics at the
University of Washington.}

\def\myaddress{University of Washington, Department of Mathematics, Box 354350,
Seattle, WA 98195-4350, USA}

\def\myemail{skovacs@uw.edu\xspace}

\def\myurladdr{http://www.math.washington.edu/$\sim$kovacs\xspace}


\DeclareMathAlphabet{\smallchanc}{OT1}{pzc}%
                                 {m}{it}
\DeclareFontFamily{OT1}{pzc}{}
\DeclareFontShape{OT1}{pzc}{m}{it}%
             {<-> s * [1.100] pzcmi7t}{}
\DeclareMathAlphabet{\mathchanc}{OT1}{pzc}%
                                 {m}{it}

\newcommand{\mcR}{\mathchanc{R}}


\DeclareFontFamily{OMS}{rsfs}{\skewchar\font'60}
\DeclareFontShape{OMS}{rsfs}{m}{n}{<-5>rsfs5 <5-7>rsfs7 <7->rsfs10 }{}
\DeclareSymbolFont{rsfs}{OMS}{rsfs}{m}{n}
\DeclareSymbolFontAlphabet{\scr}{rsfs}


\newcommand{\sF}{\scr{F}}

\newcommand{\sO}{\scr{O}}



\newcommand{\bC}{\mathbb{C}}

\newcommand{\bH}{\mathbb{H}}

\newcommand{\bP}{\mathbb{P}}

\newcommand{\bZ}{\mathbb{Z}}

\newcommand{\frd}{\mathfrak{d}}



\newcommand{\fpi}{{\sf f^{p,i}}}
\newcommand{\foi}{{\sf f^{0,i}}}
\newcommand{\hi}{{\sf h^{i}}}

\newcommand{\onto}{\twoheadrightarrow}
\newcommand{\isom}{\overset{\simeq\ }\longrightarrow}


\newcommand{\leteq}{\colon\!\!\!=}
\newcommand{\col}{\colon}






%




\DeclareMathOperator{\Ob}{{Ob}}

\DeclareMathOperator{\supp}{{supp}}


\newcommand{\factor}[2]{\left. \raise 2pt\hbox{\ensuremath{#1}} \right/
        \hskip -2pt\raise -2pt\hbox{\ensuremath{#2}}}



\newcommand{\myR}{{\mcR\!}}

\newcommand{\tld}{\widetilde }
\newcommand{\blank}{\underline{\hskip 10pt}}

%


\newcommand{\kdot}{{{\,\begin{picture}(1,1)(-1,-2)\circle*{2}\end{picture}\ }}}
\newcommand{\mydot}{\kdot}

\newcommand{\cx}{\sf}
\newcommand{\DuBois}[1]{{\underline \Omega {}^0_{#1}}}
\newcommand{\FullDuBois}[1]{{\underline \Omega {}^{\mydot}_{#1}}}
\newcommand{\Om}{\underline{\Omega}}

\def\coh#1.#2.#3.{H^{#1}(#2,#3)}
\def\dimcoh#1.#2.#3.{h^{#1}(#2,#3)}
\def\hypcoh#1.#2.#3.{\mathbb H_{\vphantom{l}}^{#1}(#2,#3)}
\def\loccoh#1.#2.#3.#4.{H^{#1}_{#2}(#3,#4)}
\def\dimloccoh#1.#2.#3.#4.{h^{#1}_{#2}(#3,#4)}
\def\lochypcoh#1.#2.#3.#4.{\mathbb H^{#1}_{#2}(#3,#4)}
\def\ses#1.#2.#3.{0  \longrightarrow  #1   \longrightarrow 
 #2 \longrightarrow #3 \longrightarrow 0} 
\def\sesshort#1.#2.#3.{0
 \rightarrow #1 \rightarrow #2 \rightarrow #3 \rightarrow 0}
\def\dist#1.#2.#3.{  #1   \longrightarrow 
 #2 \longrightarrow #3 \stackrel{+1}{\longrightarrow} } 
\def\CDdist#1.#2.#3.{  #1   @>>>  #2  @>>>   #3 @>+1>> }  
\def\shortses#1.#2.#3.{0  \rightarrow  #1   \rightarrow 
 #2  \rightarrow   #3 \rightarrow  0}
\def\shortdist#1.#2.#3.{  #1   \rightarrow 
 #2  \rightarrow   #3 \stackrel{+1}{\rightarrow} }  
\def\ddist#1.#2.#3.#4.#5.#6.{\CD
#1 @>>> #2 @>>> #3 @>+1>> \\
@VVV @VVV @VVV \\
#4 @>>> #5 @>>> #6 @>+1>> 
\endCD}
\def\ddistun#1.#2.#3.#4.#5.#6.{\CD
#1 @>>> #2 @>>> #3 @>+1>> \\
@. @VVV @VVV  \\
#4 @>>> #5 @>>> #6 @>+1>> 
\endCD}
\def\Iff#1#2#3{
\hfil\hbox{\hsize =#1
\vtop{\noin #2}
\hskip.5cm 
\lower.5\baselineskip\hbox{$\Leftrightarrow$}\hskip.5cm
\vtop{\noin #3}}\hfil\medskip}
\newcommand{\union}\cup
\newcommand{\intersect}\cap
\newcommand{\Union}\bigcup
\newcommand{\Intersect}\bigcap
\def\myoplus#1.#2.{\underset #1 \to {\overset #2 \to \oplus}}

\newcommand{\resto}{\big\vert_}

\def\qis{\,{\simeq}_{\text{qis}}\,}


\begin{document}
\makeatletter
\newenvironment{refmr}{}{}
\renewcommand{\labelenumi}{{\rm (\thethm.\arabic{enumi})}}
%
\setitemize[1]{leftmargin=*,parsep=0em,itemsep=0.125em,topsep=0.125em}
\newcommand\james{M\raise .575ex \hbox{\text{c}}Kernan}

\renewcommand\thesubsection{\thesection.\Alph{subsection}}
\renewcommand\subsection{
  \renewcommand{\sfdefault}{pag}
  \@startsection{subsection}%
  {2}{0pt}{-\baselineskip}{.2\baselineskip}{\raggedright
    \sffamily\itshape\small
  }}
\renewcommand\section{
  \renewcommand{\sfdefault}{phv}
  \@startsection{section} %
  {1}{0pt}{\baselineskip}{.2\baselineskip}{\centering
    \sffamily
    \scshape
}}
\newcounter{lastyear}\setcounter{lastyear}{\the\year}
\addtocounter{lastyear}{-1}
\newcommand\sideremark[1]{%
\normalmarginpar
\marginpar
[
\hskip .45in
\begin{minipage}{.75in}
\tiny #1
\end{minipage}
]
{
\hskip -.075in
\begin{minipage}{.75in}
\tiny #1
\end{minipage}
}}
\newcommand\rsideremark[1]{
\reversemarginpar
\marginpar
[
\hskip .45in
\begin{minipage}{.75in}
\tiny #1
\end{minipage}
]
{
\hskip -.075in
\begin{minipage}{.75in}
\tiny #1
\end{minipage}
}}
\newcommand\Index[1]{{#1}\index{#1}}
\newcommand\inddef[1]{\emph{#1}\index{#1}}
\newcommand\noin{\noindent}
\newcommand\hugeskip{\bigskip\bigskip\bigskip}
\newcommand\smc{\sc}
\newcommand\dsize{\displaystyle}
\newcommand\sh{\subheading}
\newcommand\nl{\newline}
\newcommand\input /home/kovacs/tex/latex/{\input /home/kovacs/tex/latex/} 
\newcommand\Get{\Input /home/kovacs/tex/latex/} 
\newcommand\toappear{\rm (to appear)}
\newcommand\mycite[1]{[#1]}
\newcommand\myref[1]{(\ref{#1})}
\newcommand\myli{\hfill\newline\smallskip\noindent{$\bullet$}\quad}
\newcommand\vol[1]{{\bf #1}\ } 
\newcommand\yr[1]{\rm (#1)\ } 
\newcommand\cf{cf.\ \cite}
\newcommand\mycf{cf.\ \mycite}
\newcommand\te{there exist}
\newcommand\st{such that}
\newcommand\myskip{3pt}
\newtheoremstyle{bozont}{3pt}{3pt}%
     {\itshape}
     {}
     {\bfseries}
     {.}
     {.5em}
     {\thmname{#1}\thmnumber{ #2}\thmnote{ \rm #3}}
\newtheoremstyle{bozont-sf}{3pt}{3pt}%
     {\itshape}
     {}
     {\sffamily}
     {.}
     {.5em}
     {\thmname{#1}\thmnumber{ #2}\thmnote{ \rm #3}}
\newtheoremstyle{bozont-sc}{3pt}{3pt}%
     {\itshape}
     {}
     {\scshape}
     {.}
     {.5em}
     {\thmname{#1}\thmnumber{ #2}\thmnote{ \rm #3}}
\newtheoremstyle{bozont-remark}{3pt}{3pt}%
     {}
     {}
     {\scshape}
     {.}
     {.5em}
     {\thmname{#1}\thmnumber{ #2}\thmnote{ \rm #3}}
\newtheoremstyle{bozont-def}{3pt}{3pt}%
     {}
     {}
     {\bfseries}
     {.}
     {.5em}
     {\thmname{#1}\thmnumber{ #2}\thmnote{ \rm #3}}
\newtheoremstyle{bozont-reverse}{3pt}{3pt}%
     {\itshape}
     {}
     {\bfseries}
     {.}
     {.5em}
     {\thmnumber{#2.}\thmname{ #1}\thmnote{ \rm #3}}
\newtheoremstyle{bozont-reverse-sc}{3pt}{3pt}%
     {\itshape}
     {}
     {\scshape}
     {.}
     {.5em}
     {\thmnumber{#2.}\thmname{ #1}\thmnote{ \rm #3}}
\newtheoremstyle{bozont-reverse-sf}{3pt}{3pt}%
     {\itshape}
     {}
     {\sffamily}
     {.}
     {.5em}
     {\thmnumber{#2.}\thmname{ #1}\thmnote{ \rm #3}}
\newtheoremstyle{bozont-remark-reverse}{3pt}{3pt}%
     {}
     {}
     {\sc}
     {.}
     {.5em}
     {\thmnumber{#2.}\thmname{ #1}\thmnote{ \rm #3}}
\newtheoremstyle{bozont-def-reverse}{3pt}{3pt}%
     {}
     {}
     {\bfseries}
     {.}
     {.5em}
     {\thmnumber{#2.}\thmname{ #1}\thmnote{ \rm #3}}
\newtheoremstyle{bozont-def-newnum-reverse}{3pt}{3pt}%
     {}
     {}
     {\bfseries}
     {}
     {.5em}
     {\thmnumber{#2.}\thmname{ #1}\thmnote{ \rm #3}}
\theoremstyle{bozont}    
\ifnum \value{are-there-sections}=0 {%
  \newtheorem{proclaim}{Theorem}
} 
\else {%
  \newtheorem{proclaim}{Theorem}[section]
} 
\fi
\newtheorem{thm}[proclaim]{Theorem}
\newtheorem{mainthm}[proclaim]{Main Theorem}
\newtheorem{cor}[proclaim]{Corollary} 
\newtheorem{cors}[proclaim]{Corollaries} 
\newtheorem{lem}[proclaim]{Lemma} 
\newtheorem{prop}[proclaim]{Proposition} 
\newtheorem{conj}[proclaim]{Conjecture}
\newtheorem{subproclaim}[equation]{Theorem}
\newtheorem{subthm}[equation]{Theorem}
\newtheorem{subcor}[equation]{Corollary} 
\newtheorem{sublem}[equation]{Lemma} 
\newtheorem{subprop}[equation]{Proposition} 
\newtheorem{subconj}[equation]{Conjecture}
\theoremstyle{bozont-sc}
\newtheorem{proclaim-special}[proclaim]{\specialthmname}
\newenvironment{proclaimspecial}[1]
     {\def\specialthmname{#1}\begin{proclaim-special}}
     {\end{proclaim-special}}
\theoremstyle{bozont-remark}
\newtheorem{rem}[proclaim]{Remark}
\newtheorem{subrem}[equation]{Remark}
\newtheorem{notation}[proclaim]{Notation} 
\newtheorem{assume}[proclaim]{Assumptions} 
\newtheorem{obs}[proclaim]{Observation} 
\newtheorem{example}[proclaim]{Example} 
\newtheorem{examples}[proclaim]{Examples} 
\newtheorem{complem}[equation]{Complement}
\newtheorem{const}[proclaim]{Construction}   
\newtheorem{ex}[proclaim]{Exercise} 
\newtheorem{subnotation}[equation]{Notation} 
\newtheorem{subassume}[equation]{Assumptions} 
\newtheorem{subobs}[equation]{Observation} 
\newtheorem{subexample}[equation]{Example} 
\newtheorem{subex}[equation]{Exercise} 
\newtheorem{claim}[proclaim]{Claim} 
\newtheorem{inclaim}[equation]{Claim} 
\newtheorem{subclaim}[equation]{Claim} 
\newtheorem{case}{Case} 
\newtheorem{subcase}{Subcase}   
\newtheorem{step}{Step}
\newtheorem{approach}{Approach}
\newtheorem{Fact}[proclaim]{Fact}
\newtheorem{fact}{Fact}
\newtheorem{subsay}{}
\newtheorem*{SubHeading*}{\SubHeadingName}%
\newtheorem{SubHeading}[proclaim]{\SubHeadingName}
\newtheorem{sSubHeading}[equation]{\sSubHeadingName}
\newenvironment{demo}[1] {\def\SubHeadingName{#1}\begin{SubHeading}}
  {\end{SubHeading}}%
\newenvironment{subdemo}[1]{\def\sSubHeadingName{#1}\begin{sSubHeading}}
  {\end{sSubHeading}} %
\newenvironment{demo-r}[1]{\def\SubHeadingName{#1}\begin{SubHeading-r}}
  {\end{SubHeading-r}}%
\newenvironment{subdemo-r}[1]{\def\sSubHeadingName{#1}\begin{sSubHeading-r}}
  {\end{sSubHeading-r}} %
\newenvironment{demo*}[1]{\def\SubHeadingName{#1}\begin{SubHeading*}}
  {\end{SubHeading*}}%
\newtheorem{defini}[proclaim]{Definition}
\newtheorem{question}[proclaim]{Question}
\newtheorem{subquestion}[equation]{Question}
\newtheorem{crit}[proclaim]{Criterion}
\newtheorem{pitfall}[proclaim]{Pitfall}
\newtheorem{addition}[proclaim]{Addition}
\newtheorem{principle}[proclaim]{Principle} 
\newtheorem{condition}[proclaim]{Condition}
\newtheorem{say}[proclaim]{}
\newtheorem{exmp}[proclaim]{Example}
\newtheorem{hint}[proclaim]{Hint}
\newtheorem{exrc}[proclaim]{Exercise}
\newtheorem{prob}[proclaim]{Problem}
\newtheorem{ques}[proclaim]{Question}    
\newtheorem{alg}[proclaim]{Algorithm}
\newtheorem{remk}[proclaim]{Remark}          
\newtheorem{note}[proclaim]{Note}            
\newtheorem{summ}[proclaim]{Summary}         
\newtheorem{notationk}[proclaim]{Notation}   
\newtheorem{warning}[proclaim]{Warning}  
\newtheorem{defn-thm}[proclaim]{Definition--Theorem}  
\newtheorem{convention}[proclaim]{Convention}  
\newtheorem*{ack}{Acknowledgment}
\newtheorem*{acks}{Acknowledgments}
\theoremstyle{bozont-def}    
\newtheorem{defn}[proclaim]{Definition}
\newtheorem{subdefn}[equation]{Definition}
\theoremstyle{bozont-reverse}    
\newtheorem{corr}[proclaim]{Corollary} 
\newtheorem{lemr}[proclaim]{Lemma} 
\newtheorem{propr}[proclaim]{Proposition} 
\newtheorem{conjr}[proclaim]{Conjecture}
\theoremstyle{bozont-reverse-sc}
\newtheorem{proclaimr-special}[proclaim]{\specialthmname}
\newenvironment{proclaimspecialr}[1]%
{\def\specialthmname{#1}\begin{proclaimr-special}}%
{\end{proclaimr-special}}
\theoremstyle{bozont-remark-reverse}
\newtheorem{remr}[proclaim]{Remark}
\newtheorem{subremr}[equation]{Remark}
\newtheorem{notationr}[proclaim]{Notation} 
\newtheorem{assumer}[proclaim]{Assumptions} 
\newtheorem{obsr}[proclaim]{Observation} 
\newtheorem{exampler}[proclaim]{Example} 
\newtheorem{exr}[proclaim]{Exercise} 
\newtheorem{claimr}[proclaim]{Claim} 
\newtheorem{inclaimr}[equation]{Claim} 
\newtheorem{SubHeading-r}[proclaim]{\SubHeadingName}
\newtheorem{sSubHeading-r}[equation]{\sSubHeadingName}
\newtheorem{SubHeadingr}[proclaim]{\SubHeadingName}
\newtheorem{sSubHeadingr}[equation]{\sSubHeadingName}
\newenvironment{demor}[1]{\def\SubHeadingName{#1}\begin{SubHeadingr}}{\end{SubHeadingr}}
\newtheorem{definir}[proclaim]{Definition}
\theoremstyle{bozont-def-newnum-reverse}    
\newtheorem{newnumr}[proclaim]{}
\theoremstyle{bozont-def-reverse}    
\newtheorem{defnr}[proclaim]{Definition}
\newtheorem{questionr}[proclaim]{Question}
\newtheorem{newnumspecial}[proclaim]{\specialnewnumname}
\newenvironment{newnum}[1]{\def\specialnewnumname{#1}\begin{newnumspecial}}{\end{newnumspecial}}
\newcounter{thisthm} 
\newcounter{thissection} 
\newcommand{\ilabel}[1]{%
  \newcounter{#1}%
  \setcounter{thissection}{\value{section}}%
  \setcounter{thisthm}{\value{proclaim}}%
  \label{#1}}
\newcommand{\iref}[1]{%
  (\the\value{thissection}.\the\value{thisthm}.\ref{#1})}
\numberwithin{equation}{proclaim}
\numberwithin{figure}{section} 
\newcommand\equinsect{\numberwithin{equation}{section}}
\newcommand\equinthm{\numberwithin{equation}{proclaim}}
\newcommand\figinthm{\numberwithin{figure}{proclaim}}
\newcommand\figinsect{\numberwithin{figure}{section}}
\newenvironment{sequation}{%
\numberwithin{equation}{section}%
\begin{equation}%
}{%
\end{equation}%
\numberwithin{equation}{proclaim}%
\addtocounter{proclaim}{1}%
}%
\newcommand{\num}{\arabic{section}.\arabic{proclaim}}
\newenvironment{pf}{\smallskip \noindent {\sc Proof. }}{\qed\smallskip}
\newenvironment{enumerate-p}{
  \begin{enumerate}}
  {\setcounter{equation}{\value{enumi}}\end{enumerate}}
\newenvironment{enumerate-cont}{
  \begin{enumerate}
    {\setcounter{enumi}{\value{equation}}}}
  {\setcounter{equation}{\value{enumi}}
  \end{enumerate}}
\let\lenumi\labelenumi
\newcommand{\rmlabels}{\renewcommand{\labelenumi}{\rm \lenumi}}
\newcommand{\rmlabelsoff}{\renewcommand{\labelenumi}{\lenumi}}
\newenvironment{heading}{\begin{center} \sc}{\end{center}}
\newcommand\subheading[1]{\smallskip\noindent{{\bf #1.}\ }}
\newlength{\swidth}
\setlength{\swidth}{\textwidth}
\addtolength{\swidth}{-,5\parindent}
\newenvironment{narrow}{
  \medskip\noindent\hfill\begin{minipage}{\swidth}}
  {\end{minipage}\medskip}
\newcommand\nospace{\hskip-.45ex}
\makeatother

\title{The intuitive definition of Du~Bois singularities}
\author{\me}
\thanks{\mythanks}
\address{\myaddress}
\email{\myemail}
\urladdr{\myurladdr}
\maketitle

\centerline{\it To Gerard van der Geer on the occasion of his 60$^{\text{th}}$
  birthday} %

\section{Introduction}

Let $X$ be a smooth proper variety. Then the Hodge-to-de-Rham (a.k.a.\ Fr\"olicher)
spectral sequence degenerates at $E_1$ and hence the singular cohomology group
$H^i(X, \bC)$ admits a Hodge filtration 
\begin{sequation}
  \label{eq:ndb8}
  H^{i}(X, \bC)= F^0H^{i}(X, \bC)\supseteq
  F^1H^{i}(X, \bC)\supseteq \dots
\end{sequation}%
and in particular there exists
a natural surjective map
\begin{sequation}\label{eq:ndb1}
H^i(X, \bC)\onto Gr^0_FH^{i}(X, \bC)
\end{sequation}%
where 
\begin{sequation}
  \label{eq:ndb2}
  Gr^0_FH^{i}(X, \bC) \simeq H^i(X,
  \sO_X). 
\end{sequation}

Deligne's theory of (mixed) Hodge stuctures implies that even if $X$ is singular,
there still exists a Hodge filtration and (\ref{eq:ndb1}) remains true, but in
general (\ref{eq:ndb2}) fails.

Du~Bois singularities were introduced by Steenbrink to identify the class of
singularities for which (\ref{eq:ndb2}) remains true as well. However, naturally, one
does not define a class of singularities by properties of proper varieties.
Singularities should be defined by local properties and Du~Bois singularities are
indeed defined locally.

It is known that rational singularities are Du~Bois (conjectured by Steenbrink and
proved in \cite{Kovacs99}) and so are log canonical singularities (conjectured by
Koll\'ar and proved in \cite{KK10}). These properties make Du~Bois singularities very
important in higher dimensional geometry, especially in moduli theory (see
\cite{SingBook} for more details on applications).

Unfortunately the definition of Du~Bois singularities is rather technical.  The most
important and useful fact about them is the consequence of \eqref{eq:ndb1} and
\eqref{eq:ndb2} that if $X$ is a proper variety over $\bC$ with Du~Bois
singularities, then the natural map
\begin{sequation}
  \label{eq:ndb3}
  H^i(X, \bC)\onto H^i(X, \sO_X)
\end{sequation}
is surjective.

One could try to take this as a definition, but it would not lead to a good result
for two reasons. As mentioned earlier, singularities should be defined locally and it
is not at all likely that a global cohomological assumption would turn out to be a
local property. Second, this particular condition could obviously hold
``accidentally'' and lead to the inclusion of singular spaces that should not be,
thereby further lowering the chances of having a local description of this class of
singularities.

Therefore the reasonable approach is to keep Steenbrink's original defition, after
all it has been proven to define a useful class. It does satisfy the first
requirement above: it is defined locally. Once that is accepted, one might still
wonder if proper varieties with Du~Bois singularities could be characterized with a
property that is close to requiring that \eqref{eq:ndb3} holds.

The main result of the present paper is exactly a characterization like that. 

As we have already observed, simply requiring that \eqref{eq:ndb3} holds is likely to
lead to a class of singularities that is too large. A more natural requirement is to
ask that \eqref{eq:ndb2} holds. Clearly, \eqref{eq:ndb2} implies \eqref{eq:ndb3} by
\eqref{eq:ndb1}, so our goal requirement is indeed satisfied.

The definition \cite[(3.5)]{Steenbrink83} of Du~Bois singularities easily implies
that if $X$ has Du~Bois singularities and $H\subset X$ is a general member of a
basepoint-free linear system, then $H$ has Du~Bois singularities as well. Therefore
it is reasonable that in trying to give an intuitive definition of Du~Bois
singularities, one may assume that the defining condition holds for the
intersection of general members of a fixed basepoint-free linear system.

I will prove here that this is actually enough to characterize Du~Bois singularities
(see \eqref{def:db} for their definition). This result is not geared for
applications, it is mainly interesting from a philosophical point of view. It says
that the local definition not only achieves the desired property for proper
varieties, but does it in an economical way: it does not allow more than it has to.

At the same time, a benefit of this characterization is the fact that for the
uninitiated reader this provides a relatively simple criterion without the use of
derived categories or resolutions directly. In fact, one can make the condition
numerical. This is a trivial translation of the ``real'' statement, but further
emphasizes the simplicity of the criterion.

In order to do this we need to define some notation: Let $X$ be a proper algebraic
variety over $\bC$ and consider Deligne's Hodge filtration $F^\kdot$ on $H^{i}(X,
\bC)$ as in \eqref{eq:ndb8}. Let
$$
Gr^p_FH^{i}(X,\bC)= \factor{F^pH^i(X,\bC)}{F^{p+1}H^i(X,\bC)}
$$
and 
$$
\fpi(X) = \dim_{\bC} Gr^p_FH^{i}(X,\bC).
$$
I will also use the usual notation
$$
\hi(X,\sO_X) = \dim_{\bC} H^{i}(X,\sO_X).
$$

Recall (cf.~\eqref{obs:natural-map}) that by the construction of the Hodge filtration
and the degeneration of the Hodge-to-de-Rham spectral sequence at $E_1$, the natural
surjective map from $H^i(X,\bC)$ factors through $H^i(X,\sO_X)$:\smallskip
$$
\xymatrix{%
  H^i(X, \bC) \ar[r] \ar@/^{13pt}/@{->>}[rr] & H^i(X, \sO_{X}) \ar[r] &
  Gr^0_FH^i(X,\bC).  }
$$
In particular, the natural morphism
\begin{sequation}
  \label{eq:ndb6}
  H^i(X, \sO_{X}) \onto Gr^0_FH^{i}(X,\bC)
\end{sequation}
is also surjective and hence
\begin{sequation}
  \label{eq:ndb4}
  \hi(X,\sO_X)\geq \foi(X).
\end{sequation}

Now we are ready for the main theorem. It essentially says that if the opposite
inequality of \eqref{eq:ndb4} holds for general complete intersections, then the
ambient variety has Du~Bois singularities.

More precisely I will prove the following.

\begin{thm}
  \label{thm:numerical}
  Let $X$ be a proper variety over $\bC$ with a fixed 
  basepoint-free linear system $\frd$. (For instance, $X$ is projective with a fixed
  projective embedding).  Then $X$ has only Du~Bois singularities if and only if
  $\hi(L,\sO_L)\leq \foi(L)$ for $i>0$ for any $L\subseteq X$ which is the
  intersection of general members of $\frd$.
\end{thm}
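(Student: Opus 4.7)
The plan is to observe that if $X$ has Du~Bois singularities, then by the fact recalled in the introduction (an immediate consequence of \cite[(3.5)]{Steenbrink83}), a general member of $\frd$ also has Du~Bois singularities; iterating, every general complete intersection $L$ of members of $\frd$ is Du~Bois. For such a proper Du~Bois $L$ the morphism $\sO_L\to\DuBois{L}$ is a quasi-isomorphism, so the surjection in \eqref{eq:ndb6} is in fact an isomorphism, giving $\hi(L,\sO_L)=\foi(L)$ and a fortiori the asserted inequality.

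\textbf{The ``if'' direction: setup.} For the converse I would proceed by induction on $\dim X$. The base case $\dim X=0$ is handled by the fact that $X$, as a variety, is reduced and zero-dimensional, hence automatically Du~Bois. For the inductive step, fix a general $H\in\frd$. Then $\frd|_H$ is basepoint-free on $H$, and any general complete intersection in $H$ with respect to $\frd|_H$ is also a general complete intersection in $X$ with respect to $\frd$, of one higher codimension. Hence the numerical hypothesis on $(X,\frd)$ restricts to the numerical hypothesis on $(H,\frd|_H)$, and by induction $H$ has Du~Bois singularities. What remains is to propagate the Du~Bois property from general $H$ back to $X$.

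\textbf{Key step via a triangle.} Let $C$ denote the cone of $\sO_X\to\DuBois{X}$ in the bounded derived category of coherent sheaves on $X$, so that $X$ is Du~Bois iff $C\simeq 0$. The key technical input I plan to invoke is the Bertini-type quasi-isomorphism $\DuBois{X}\otimes^{\mathbf{L}}\sO_H\simeq\DuBois{H}$ for general $H\in\frd$, which is built into the simplicial hyperresolution construction of the filtered De~Rham complex. Combining this with the Koszul triangle $\sO_X(-H)\to\sO_X\to\sO_H$ yields a distinguished triangle $\DuBois{X}(-H)\to\DuBois{X}\to\DuBois{H}$. The natural morphisms $\sO_X(-H)\to\DuBois{X}(-H)$, $\sO_X\to\DuBois{X}$, and $\sO_H\to\DuBois{H}$ fit into a morphism between these two triangles, and the $3\times 3$-lemma for triangulated categories produces a distinguished triangle of cones
\begin{equation*}
C\otimes\sO_X(-H)\xrightarrow{\,\cdot s_H\,}C\longrightarrow C_H\xrightarrow{+1}
\end{equation*}
where $C_H$ is the cone of $\sO_H\to\DuBois{H}$ and $s_H$ is the section defining $H$. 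Since $H$ is Du~Bois by induction, $C_H\simeq 0$, and therefore multiplication by $s_H$ is a quasi-isomorphism $C\otimes\sO_X(-H)\xrightarrow{\sim}C$. Passing to cohomology sheaves, multiplication by $s_H$ is an isomorphism on the coherent sheaf $\cH^i(C)$ for every $i$. At any point $p\in H$ the section $s_H$ lies in the maximal ideal $\frm_p$, so Nakayama's lemma applied to the finitely generated $\sO_{X,p}$-module $\cH^i(C)_p$ forces $\cH^i(C)_p=0$. Basepoint-freeness of $\frd$ ensures that every $p\in X$ lies on some general $H$, so $\cH^i(C)=0$ everywhere; hence $C\simeq 0$ and $X$ is Du~Bois.

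\textbf{Main obstacle.} The central technical hurdle is the Bertini-type quasi-isomorphism $\DuBois{X}\otimes^{\mathbf{L}}\sO_H\simeq\DuBois{H}$ for general $H\in\frd$. This is implicit in the simplicial construction of $\DuBois{X}$ together with classical Bertini applied to the components of a smooth resolution, but extracting and justifying it cleanly is the step requiring the most care. Once it is in hand, the rest is formal homological algebra in the derived category plus Nakayama. Notably, this approach does not use the hypothesis for $L=X$ at all — the inductive step plus the Bertini triangle already suffice — so the condition for $L=X$ in the statement is automatic given its validity on all positive-codimension general complete intersections.
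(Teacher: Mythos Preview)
Your approach is genuinely different from the paper's. The paper first cuts $X$ down by general members of $\frd$ until the non--Du~Bois locus $\Sigma_X$ is finite (using $\Sigma_H=\Sigma_X\cap H$ for general $H$), and then applies the hypothesis to that cut-down variety itself: the map $H^i(L,\sO_L)\to\bH^i(L,\Om_L^0)$ becomes an isomorphism, so the DB defect $\Om_L^\times$ has vanishing hypercohomology in every degree, and since its cohomology sheaves have $0$-dimensional support the hypercohomology spectral sequence collapses to give $h^i(\Om_L^\times)=0$ for all $i$. You instead run an induction on $\dim X$ via the Bertini triangle and Nakayama.

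There is, however, a real gap, and your own closing remark exposes it. The assertion ``basepoint-freeness of $\frd$ ensures that every $p\in X$ lies on some general $H$'' is not justified: the members of $\frd$ through a fixed $p$ form a hyperplane $V_p\subset|\frd|$, and nothing prevents $V_p$ from lying entirely inside the non-general locus. Indeed, if your argument worked without the $L=X$ case it would prove that $X$ is Du~Bois whenever every general complete intersection of \emph{positive} codimension is. But take any projective $X$ with a single isolated non--Du~Bois point $p$ and $\frd$ very ample: every general positive-codimension $L$ misses $p$ and is smooth, so the positive-codimension part of the hypothesis holds, yet $X$ is not Du~Bois. What your Nakayama step actually establishes is only that $\supp\cH^i(C)$ is disjoint from every general $H$; this constrains the support but does not force it to be empty. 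To finish you must invoke the $L=X$ case of the hypothesis to obtain $\bH^i(X,C)=0$ and then use the finite-support spectral sequence argument exactly as the paper does. The top-level hypothesis is therefore essential, not redundant.
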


\begin{cor}
  \label{cor:isolated}
  Let $X\subseteq \bP^N$ be a projective variety over $\bC$ with only isolated
  singularities.  Then $X$ has only Du~Bois singularities if and only if
  $\hi(X,\sO_X)\leq \foi(X)$ for $i>0$.
\end{cor}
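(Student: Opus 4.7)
The plan is to prove both directions. The forward direction is immediate from the Bertini-type property \cite[(3.5)]{Steenbrink83} already noted in the introduction: iteratively, every general complete intersection $L$ of members of $\frd$ has Du~Bois singularities when $X$ does, so $\sO_L\qis\DuBois{L}$ and $\hi(L,\sO_L) = \bH^i(L,\DuBois{L}) = \foi(L)$.

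For the converse, I first repackage the numerical hypothesis as a hypercohomology vanishing statement for the cone $Q_L := \operatorname{cone}(\sO_L\to\DuBois{L})$. Combining $\hi(L,\sO_L)\leq \foi(L)$ with the reverse inequality \eqref{eq:ndb4} yields equality for $i>0$; for $i=0$ the equality also holds since general $L$ is reduced (Bertini) and both sides then count connected components of $L$. Thus the surjection in \eqref{eq:ndb6} becomes an isomorphism $H^i(L,\sO_L)\isom \bH^i(L,\DuBois{L})$ for every $i\geq 0$, and the long exact sequence of hypercohomology yields $\bH^i(L,Q_L)=0$ for every $i$ and every general complete intersection $L$ of members of $\frd$.

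The crux is to deduce $Q_X=0$, i.e., that $X$ is Du~Bois. I argue by contradiction. Since $\sO_X\to\DuBois{X}$ is a quasi-isomorphism on the smooth locus, each coherent sheaf $\sH^r(Q_X)$ is supported in $\Sing(X)$. Assuming $Q_X\neq 0$, set
\[
d = \max_r\dim\supp\sH^r(Q_X), \qquad r_0 = \max\{\,r : \dim\supp\sH^r(Q_X)=d\,\},
\]
and let $L$ be the intersection of $d$ sufficiently general members of $\frd$. By Bertini, $L$ is a codimension-$d$ regular complete intersection that meets $\supp\sH^{r_0}(Q_X)$ in a nonempty $0$-dimensional subscheme and misses every $\supp\sH^r(Q_X)$ with $r>r_0$ (of dimension $<d$). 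Using the Bertini-type identification $\DuBois{X}\otimes^{\mathbf{L}}_{\sO_X}\sO_L\qis\DuBois{L}$ for such $L$, one gets $Q_L\qis Q_X\otimes^{\mathbf{L}}_{\sO_X}\sO_L$, and the hyper-Tor spectral sequence shows that (i) every $\sH^n(Q_L)$ has $0$-dimensional support, and (ii) $\sH^{r_0}(Q_L)=\sH^{r_0}(Q_X)\otimes\sO_L\neq 0$ (the potential higher Tor contributions $\operatorname{Tor}_k(\sH^{r_0+k}(Q_X),\sO_L)$ with $k>0$ vanish because $L$ misses the support of each $\sH^{r_0+k}(Q_X)$). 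By (i), the hypercohomology spectral sequence degenerates at $E_2$, giving $\bH^i(L,Q_L)=H^0(L,\sH^i(Q_L))$; taking $i=r_0$ contradicts (ii), since a nonzero coherent sheaf with $0$-dimensional support has nonzero global sections. The main obstacle is the Bertini-type identification $\DuBois{X}\otimes^{\mathbf{L}}_{\sO_X}\sO_L\qis\DuBois{L}$ for general complete intersections of members of a basepoint-free linear system; it is the bridge that transports information from $X$ to $L$ and back, and once it is in place the rest is careful bookkeeping of supports under derived restriction together with the spectral-sequence manipulations above.
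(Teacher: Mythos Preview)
Your argument is correct in substance, but it never explicitly invokes the isolated-singularity hypothesis, and it reproves the main theorem rather than deducing the corollary from it. The paper's proof is one sentence: since $\Sing(X)$ is finite, every general complete intersection $L\subsetneq X$ of hyperplanes in $\bP^N$ misses $\Sing(X)$ and is smooth, so $\hi(L,\sO_L)=\foi(L)$ holds trivially for such $L$; together with the hypothesis on $X$ itself this verifies the condition of Theorem~\ref{thm:numerical} for every $L$, and the corollary follows.

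Two remarks on your write-up. First, you assert $\hi(L,\sO_L)\leq\foi(L)$ (and hence $\bH^i(L,Q_L)=0$) for \emph{every} general complete intersection $L$, but the corollary only gives this for $L=X$; extending it to positive-codimension $L$ is exactly where the isolated-singularity assumption enters, and you should say so. Second, once you note that $\Sing(X)$ is finite, your own machinery collapses: every $\sH^r(Q_X)$ has $0$-dimensional support, so your $d=0$, your $L$ is $X$ itself, and neither the Bertini compatibility $\Om^0_X\otimes^{\mathbf L}\sO_L\qis\Om^0_L$ nor the hyper-Tor spectral sequence is ever invoked. What survives is precisely the argument the paper gives in \S3 for Theorem~\ref{thm:naive-db}: for a complex whose cohomology sheaves have $0$-dimensional support one has $\bH^i(X,Q_X)=H^0(X,\sH^i(Q_X))$, and vanishing of the left side forces $Q_X\qis 0$. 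So your route is valid but circuitous; the intended proof is the one-line reduction to Theorem~\ref{thm:numerical}.
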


\begin{proof}
  As $X$ has only isolated singularities, a general hyperplane section is smooth and
  does not contain any of the singular points. Hence as soon as $\hi(X,\sO_X)\leq
  \foi(X)$ one also has that $\hi(L,\sO_L)\leq \foi(L)$ for any $L\subseteq X$ which
  is the intersection of general hyperplanes in $\bP^N$. Therefore the statement
  follows from \eqref{thm:numerical}.
\end{proof}

These statements reiterate the fact that singularities impose restrictions on global
cohomological conditions. In particular one has the following ad hoc consequence:

\begin{cor}
  Let $X\subseteq \bP^N$ be a projective variety over $\bC$ with only isolated
  singularities.  Assume that $\hi(X,\sO_X)=0$ for $i>0$. Then $X$ has only Du~Bois
  singularities.
\end{cor}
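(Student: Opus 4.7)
The plan is to reduce immediately to Corollary \eqref{cor:isolated}, which already handles the isolated-singularity case under the inequality hypothesis $\hi(X,\sO_X) \le \foi(X)$ for $i>0$. The only thing one needs to verify is that the cohomology vanishing assumption forces this inequality, and this is trivial: by definition
$$
\foi(X) = \dim_{\bC} Gr^0_F H^{i}(X,\bC) \geq 0,
$$
so the hypothesis $\hi(X,\sO_X) = 0$ gives
$$
\hi(X,\sO_X) = 0 \leq \foi(X) \qquad \text{for } i>0.
$$
Applying \eqref{cor:isolated} then yields that $X$ has only Du~Bois singularities.

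Since the argument is a one-step invocation of the previous corollary together with the tautology that a dimension is non-negative, there is no genuine obstacle to overcome. The point of stating this consequence is not technical but illustrative: it shows that a global cohomological vanishing — which is easy to check in many geometric situations (e.g.\ for complete intersections in projective space with mild numerical invariants, or for varieties whose structure sheaf has no higher cohomology for Kodaira- or Kawamata-Viehweg-type reasons) — is already strong enough, in the presence of isolated singularities, to force the seemingly much more delicate local Du~Bois condition. In this sense the corollary is a concrete advertisement for the philosophical content of Theorem~\eqref{thm:numerical}.
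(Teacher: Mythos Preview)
Your proof is correct and is exactly the argument given in the paper: observe that $\foi(X)\geq 0$ trivially, so the vanishing hypothesis yields the inequality needed to invoke Corollary~\ref{cor:isolated}. The additional philosophical commentary is fine but not part of the formal proof.
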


\begin{proof}
  As $\foi(X)\geq 0$, the statement follows from \eqref{cor:isolated}.
\end{proof}

Observe that \eqref{eq:ndb6} combined with the condition $\hi(L,\sO_L)\leq \foi(L)$
implies that $H^i(L, \sO_L) \to Gr^0_FH^i(L,\bC)$ is an isomorphism and hence
\eqref{thm:numerical} follows from the following.

\begin{thm}\label{thm:naive-db}
  Let $X$ be a proper variety over $\bC$ with a fixed basepoint-free linear system
  $\frd$.  Then $X$ has only Du~Bois singularities if and only if for all $i>0$ and
  for any $L\subseteq X$, which is the intersection of general members of $\frd$, the
  natural map,
  \begin{equation*}
    \nu_i=\nu_i(L):H^i(L, \sO_L) \to Gr^0_FH^i(L,\bC)
  \end{equation*}
  given by Deligne's theory \cite{MR0498551,MR0498552,Steenbrink83,GNPP88}
  (cf.~\eqref{obs:natural-map}) is an isomorphism for all $i$.
\end{thm}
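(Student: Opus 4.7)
Let $\cC := \mathrm{cone}(\sO_X \to \Om_X^{\,0})$ in $D^b_{\mathrm{coh}}(X)$; by definition $X$ is Du~Bois if and only if $\cC \iso 0$. By Deligne--Du~Bois theory one has $\bH^i(L, \Om_L^{\,0}) \iso Gr^0_F H^i(L, \bC)$ for every proper $L$, and the long exact sequence of hypercohomology attached to the distinguished triangle $\sO_L \to \Om_L^{\,0} \to \cC_L$, where $\cC_L := \mathrm{cone}(\sO_L \to \Om_L^{\,0})$, converts the hypothesis ``$\nu_i(L)$ is an isomorphism for all $i > 0$'' into the uniform vanishing $\bH^i(L, \cC_L) = 0$ for all $i \geq 0$; the $i = 0$ case is automatic once $L$ is proper, connected, and reduced, since both $H^0$ terms are $\bC$.

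The ``only if'' direction is then immediate from Steenbrink's result \cite[(3.5)]{Steenbrink83}, which the paper already invokes in the introduction: the Du~Bois property is preserved by general members of a basepoint-free linear system, hence by any general complete intersection $L$, so $\cC_L \iso 0$ and each $\nu_i(L)$ is an isomorphism.

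For the ``if'' direction, let $\sL$ be the line bundle attached to $\frd$ and let $\phi \colon X \to \bP^N$ be the morphism it defines. Set $Z := \supp \cC$, the non-Du-Bois locus, and suppose for contradiction $Z \neq \emptyset$; write $z := \dim Z$. The derived refinement of Steenbrink's result provides, for a general $H \in |\sL|$, a distinguished triangle $\Om_X^{\,0}(-H) \to \Om_X^{\,0} \to \Om_H^{\,0}$, and iterating gives $\cC \otimes^L \sO_L \iso \cC_L$ for $L = H_1 \cap \dots \cap H_k$ a general complete intersection of members of $\frd$. When $\sL|_Z$ is big, take $k = z$; then $L \cap Z = \{p_1, \dots, p_s\}$ is a nonempty finite set of general points of $Z$. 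Since $\cC_L$ is supported on that finite set, $\bH^i(L, \cC_L)$ splits as a direct sum of stalk cohomologies, so the hypothesis forces each $(\cC_L)_{p_j}$ to be acyclic. Reducing further to the residue field yields $\cC \otimes^L k(p_j) \iso 0$, and derived Nakayama (applied to the top nonvanishing cohomology sheaf) forces $\cC$ itself to be acyclic in a Zariski neighbourhood of each $p_j$. This contradicts $p_j$ being a general point of $\supp \cC = Z$, so $Z = \emptyset$ and $\cC \iso 0$.

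The main obstacle is the reliance on $\sL|_Z$ being big. This holds whenever $\phi$ is finite -- in particular for any projective embedding, which is the main example the theorem refers to -- but can fail for a genuinely semi-ample basepoint-free $\frd$ in which $Z$ is swallowed by a positive-dimensional fiber of $\phi$. In that case no general complete intersection of members of $\frd$ cuts $Z$ to dimension zero, and additional input is needed: induction on $\dim X$ applied to $H$ gives $\cC_H \iso 0$, which via the triangle $\cC \otimes \sL^{-1} \to \cC \to \cC_H$ delivers the periodicity $\cC \iso \cC \otimes \sL^{-1}$, and this must be combined with the hypothesis applied to $L$ being a general fiber of $\phi$ in order to force $\cC$ to vanish on those fibers. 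Making this semi-ample case fit cleanly with the rest of the argument is, I expect, the principal technical hurdle.
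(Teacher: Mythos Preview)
Your approach is the paper's approach. The paper also forms the DB defect (denoted $\Om_X^\times$ there, your $\cC$), replaces $X$ by a general complete intersection $L$ so that the support $\Sigma_L$ of the defect becomes finite, and then uses the vanishing $\bH^i(L,\Om_L^\times)=0$ (coming from the hypothesis via the exact triangle) to kill $\Om_L^\times$. The only cosmetic difference is in this last step: the paper observes that the hypercohomology spectral sequence $H^p(L,h^q(\Om_L^\times))\Rightarrow\bH^{p+q}(L,\Om_L^\times)$ degenerates because the support is zero-dimensional, giving $h^i(\Om_L^\times)=0$ directly; your stalkwise derived-Nakayama argument is the same computation read locally.

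The obstacle you flag in the genuinely semi-ample case is not something the paper resolves. Its entire reduction is the single sentence ``we may replace $X$ with an intersection of general members of $\frd$ and assume that $\Sigma_X$ is finite,'' asserted without further comment. So your periodicity/fiber workaround is already going beyond what the paper supplies; you are not missing an idea from the paper here, you have noticed a point the paper leaves unexamined. In the case where $\frd$ is ample (for instance a projective embedding, the paper's motivating example) both arguments are complete and essentially identical.
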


\begin{rem}\label{rem:one-direction}
  It is clear that if $X$ has only Du~Bois singularities then $\nu_i(L)$ is an
  isomorphism for all $L$.  Therefore the interesting statement of the theorem is
  that the condition above implies that $X$ has only Du~Bois singularities.
\end{rem}

\begin{demo}{\bf Definitions and Notation}\label{demo:defs-and-not}
  Unless otherwise stated, all objects are assumed to be defined over $\bC$, all
  schemes are assumed to be of finite type over $\bC$ and a morphism means a morphism
  between schemes of finite type over $\bC$.

  Let $X$ be a complex scheme (i.e., a scheme of finite type over $\bC$) of dimension
  n. Let $D_{\rm filt}(X)$ denote the derived category of filtered complexes of
  $\sO_{X}$-modules with differentials of order $\leq 1$ and $D_{\rm filt, coh}(X)$
  the subcategory of $D_{\rm filt}(X)$ of complexes $\cx K$, such that for all $i$,
  the cohomology sheaves of $Gr^{i}_{\rm filt}K^{\kdot}$ are coherent cf.\
  \cite{DuBois81}, \cite{GNPP88}.  Let $D(X)$ and $D_{\rm coh}(X)$ denote the derived
  categories with the same definition except that the complexes are assumed to have
  the trivial filtration.  The superscripts $+, -, b$ carry the usual meaning
  (bounded below, bounded above, bounded).  Isomorphism in these categories is
  denoted by $\qis$.  A sheaf $\sF$ is also considered as a complex $\sF^\kdot$ with
  $\sF^0=\sF$ and $\sF^i=0$ for $i\neq 0$.  If $K^{\kdot}$ is a complex in any of the
  above categories, then $h^i(K^{\kdot})$ denotes the $i$-th cohomology sheaf of
  $K^{\kdot}$.

  The right derived functor of an additive functor $F$, if it exists, is denoted by
  $\myR F$ and $\myR^iF$ is short for $h^i\circ \myR F$. Furthermore $\bH^i$
  will denote $\myR^i\Gamma$,
  where $\Gamma$ is the functor of global sections



\end{demo}

\section{Hyperresolutions and Du~Bois' original definition}

We will start with Du~Bois's generalized De Rham complex. The original construction
of the Deligne-Du~Bois's complex, $\FullDuBois{X}$, is based on simplicial
resolutions. The reader interested in the details is referred to the original article
\cite{DuBois81}.  Note also that a simplified construction was later obtained in
\cite{Carlson85} and \cite{GNPP88} via the general theory of polyhedral and cubic
resolutions.  An easily accessible introduction can be found in \cite{Steenbrink85}.

The word ``hyperresolution'' will refer to either simplicial, polyhedral, or cubic
resolution. Formally, the construction of $\FullDuBois{X}$ is the same regardless the
type of resolution used and no specific aspects of either types will be used.

\begin{thm}[{\cite[6.3, 6.5]{DuBois81}}]\label{defDB}
  Let $X$ be a complex scheme of finite type and $D$ a closed subscheme whose
  complement is dense in $X$. Then there exists a unique object $\FullDuBois X \in
  \Ob D_{\rm filt}(X)$ such that using the notation
  $$
  \Om^p_X\leteq Gr^{p}_{\rm filt}\, \FullDuBois{X}[p],
  $$
  it satisfies the following properties
  \begin{enumerate}
  \item 
    $ \FullDuBois{X} \qis \bC_{X} $, i.e., $\FullDuBois{X}$ is a resolution of the
    constant sheaf $\bC$ on $X$.

  \item $\underline{\Omega}_{(\blank)}^{\mydot}$ is functorial, i.e., if $\phi \col
    Y\to X$ is a morphism of proper complex schemes of finite type, then there exists
    a natural map $\phi^{*}$ of filtered complexes
    $$
    \phi^{*}\col \FullDuBois{X} \to R\phi_{*}\underline{\Omega}_Y^{\mydot}.
    $$
    Furthermore, $\FullDuBois{X} \in \Ob \left(D^{b}_{filt, coh}(X)\right)$ and if
    $\phi$ is proper, then $\phi^{*}$ is a morphism in $D^{b}_{filt, coh}(X)$.
    \ilabel{functorial}

  \item Let $U \subseteq X$ be an open subscheme of $X$. Then
    $$
    \FullDuBois{X}\resto U 
    \qis\underline{\Omega}^{\,\mydot}_U.
    $$

  \item If $X$ is proper, there exists a spectral sequence degenerating at $E_1$ and
    abutting to the singular cohomology of $X$ such that the resulting filtration
    coincides with Deligne's Hodge filtration:
    $$
    E_1^{pq}={\bH}^q \left(X, \Om^p_X\right) \Rightarrow H^{p+q}(X, \bC).
    $$\ilabel{item:Hodge}
    In particular,
    $$Gr^p_FH^{p+q}(X,\bC)\simeq {\bH}^q \left(X, \Om^p_X\right).$$

  \item If\/ $\varepsilon_\mydot\col X_\mydot\to X$ is a hyperresolution, then
    $$
    \FullDuBois{X}\qis \myR{\varepsilon_\mydot}_* \Omega^\mydot_{X_\mydot}.
    $$
    In particular, $h^i\left(\Om^p_X\right)=0$ for $i<0$.

  \item Let $H\subset X$ be a general member of a basepoint-free linear system.
    Then
    $$\Om_{H}^\kdot\qis \Om_{X}^\kdot\otimes_L\sO_H$$
    \ilabel{item:db-cx-of-hyper-rel}

  \item There exists a natural map, $\sO_{X}\to \Om^0_X$, compatible with
    \iref{functorial}. \ilabel{item:dR-to-DB}

  \item If\/ $X$ is smooth, then
    $$
    \FullDuBois{X}\qis\Omega^\mydot_X.
    $$
    In particular,
    $$
    \Om^p_X\qis\Omega^p_X.
    $$ 
    \ilabel{item:testing}

  \item If\/ $\phi\col Y\to X$ is a resolution of singularities, then
    $$
    \Om_X^{\dim X}\qis \myR\phi_*\omega_Y.
    $$

  \item\ilabel{item:exact-triangle} If $\pi : \tld Y \rightarrow Y$ is a projective
    morphism, $X \subset Y$ is a reduced closed subscheme such that $\pi$ is an
    isomorphism outside of $X$, $E$ is the reduced subscheme of $\tld Y$ with support
    equal to $\pi^{-1}(X)$, and $\pi' : E \rightarrow X$ is the induced map, then for
    each $p$ one has an exact triangle in the derived category,
    $$
    \xymatrix{ \Om^p_Y \ar[r] & \Om^p_X \oplus \myR \pi_* \Om^p_{\tld Y} \ar[r]^-{-}
      & \myR
      \pi'_* \Om^p_E \ar[r]^-{+1} & .\\
    }
    $$
  \end{enumerate}
\end{thm}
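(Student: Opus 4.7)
The plan is to construct $\FullDuBois{X}$ by hyperresolution, following \cite{DuBois81}, and to verify the listed properties in roughly increasing order of difficulty. First, I would invoke the general existence theorem \cite{GNPP88} to produce a hyperresolution $\varepsilon_\kdot\colon X_\kdot\to X$ whose components $X_i$ are smooth proper, with augmentation of cohomological descent. Then I would set
\[
\FullDuBois{X} \leteq \myR{\varepsilon_\kdot}_*\Omega_{X_\kdot}^{\kdot},
\]
filtered by the ``b\^ete'' filtration on each $\Omega_{X_i}^{\kdot}$ combined with the simplicial degree. This definition is exactly property (5); property (1) then follows from cohomological descent together with the holomorphic Poincar\'e lemma on each smooth $X_i$. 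Property (8) is immediate: if $X$ is smooth, the constant simplicial scheme $X_\kdot = X$ is itself a hyperresolution, so the construction collapses to the usual de~Rham complex.

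Next I would dispatch the formal properties. Functoriality (2) follows from the existence of compatible hyperresolutions for a morphism $\phi\colon Y\to X$, which is a standard feature of the theory; the induced simplicial map and pullback of K\"ahler differentials assemble to $\phi^*$, while boundedness and coherence of $\FullDuBois{X}$ hold because only finitely many smooth components of bounded dimension contribute. Property (3) reduces to the fact that the restriction of a hyperresolution to an open subset is again a hyperresolution. The natural map (7) is induced by $\sO_X\to\myR\varepsilon_{\kdot *}\sO_{X_\kdot}$, manifestly compatible with pullback.

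The central content is (4): degeneration at $E_1$ together with identification of the resulting filtration with Deligne's. On each smooth proper $X_i$, classical Hodge theory gives degeneration; assembling via the simplicial double complex yields a spectral sequence $E_1^{pq}=\bH^q(X,\Om^p_X)\Rightarrow H^{p+q}(X,\bC)$. The main obstacle --- and this is the genuinely delicate step --- is identifying the filtration arising from the stupid filtration on $\FullDuBois{X}$ with Deligne's Hodge filtration on $H^{p+q}(X,\bC)$. The plan is to pull both filtrations back to $X_\kdot$, where each term in the simplicial scheme is smooth proper and the two filtrations coincide with the classical Hodge filtration, and then descend using strictness of morphisms of mixed Hodge structures. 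This is essentially Deligne's argument in \cite{MR0498552}, adapted to the hyperresolution setting.

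For the remaining items, (6) uses generic transversality: a general basepoint-free $H$ pulls back to a hyperresolution $H_\kdot\subset X_\kdot$, and the Koszul resolution of $\sO_H$ tensored with $\Omega^p_{X_\kdot}$ computes the derived restriction. For (9), one checks in the spectral sequence computing $\Om_X^{\dim X}$ from the hyperresolution that only the top-dimensional component contributes in top degree, yielding $\myR\phi_*\omega_Y$. The exact triangle in (10) comes from recognizing $(\tld Y, E, Y, X)$ as the basic building block of a cubic hyperresolution, whose Mayer--Vietoris triangle is intrinsic to the construction. Finally, uniqueness follows axiomatically from (2), (5), and (8): any candidate is determined by descent from smooth hyperresolutions, where it must agree with the honest de~Rham complex. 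Throughout, the single hardest step is the identification of the filtration in (4) with Deligne's; everything else reduces to standard manipulations with hyperresolutions.
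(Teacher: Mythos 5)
The paper offers no proof of this theorem --- it is quoted as a compilation of results from \cite{DuBois81} and \cite{GNPP88}, to which the reader is explicitly referred --- and your outline reproduces exactly the construction in those sources: $\FullDuBois{X}\leteq\myR{\varepsilon_\mydot}_*\Omega^\mydot_{X_\mydot}$ filtered by the b\^ete filtration, cohomological descent for (1)--(3), (7), (8), Deligne's mixed-Hodge-complex machinery for (4), dimension counting for (9), and the $2$-cubic Mayer--Vietoris descent for (10), with uniqueness via domination of any two hyperresolutions by a common one. The only caveat is in (4): degeneration at $E_1$ of the total spectral sequence does not follow from degeneration on each smooth proper component separately, but from the strictness package for cohomological mixed Hodge complexes --- which is the mechanism you correctly invoke one sentence later, so the sketch stands.
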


It turns out that the Deligne-Du~Bois complex behaves very much like the de~Rham
complex for smooth varieties. Observe that \iref{item:Hodge} says that the
Hodge-to-de~Rham spectral sequence works for singular varieties if one uses the
Deligne-Du~Bois complex in place of the de~Rham complex. This has far reaching
consequences and if the associated graded pieces, $\Om^p_X$ turn out to be
computable, then this single property leads to many applications.

\begin{obs}\label{obs:natural-map}
  Notice that \iref{item:dR-to-DB} gives a natural map $\sO_{X}\to \Om^0_X$.  This
  implies that the natural map $H^i(X, \bC) \rightarrow \bH^i(X,
  \DuBois{X})$, which is surjective when $X$ is proper because of the degeneration at
  $E_1$ of the spectral sequence in \iref{item:Hodge}, factors as
  $$
  \xymatrix{%
    H^i(X, \bC) \ar[r] \ar@/^{13pt}/[rr] & H^i(X, \sO_{X}) \ar[r] & \bH^i(X,
    \DuBois{X})= & \hskip-2.35em Gr^0_FH^i(X,\bC).  }
  $$
  The induced map $H^i(X, \sO_{X}) \to Gr^0_FH^i(X,\bC)$ is the one that
  appears in \eqref{thm:naive-db}.
\end{obs}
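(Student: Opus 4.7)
The plan is to check commutativity of a triangle in $D(X)$ and then apply $\myR\Gamma(X,-)$. The map $H^i(X,\bC)\to \bH^i(X,\Om^0_X)$ is the hypercohomology of the canonical composite
$$
\psi\col\bC_X\qis\FullDuBois{X}\onto Gr^0_{\mathrm{filt}}\FullDuBois{X}[0]=\Om^0_X,
$$
in which the second arrow is the projection onto the top graded piece of the Hodge filtration. The intermediate group $H^i(X,\sO_X)$ enters via the canonical inclusion $\iota\col\bC_X\hookrightarrow\sO_X$ together with the natural map $\alpha\col\sO_X\to\Om^0_X$ supplied by \iref{item:dR-to-DB}. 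Thus the observation reduces to the identity $\psi=\alpha\circ\iota$ in $D(X)$; combined with the $E_1$-degeneration in \iref{item:Hodge} (which identifies $\bH^i(X,\Om^0_X)$ with $Gr^0_FH^i(X,\bC)$ and forces the composite $H^i(X,\bC)\to Gr^0_FH^i(X,\bC)$ to be surjective in the proper case), this produces exactly the factorization displayed in the observation.

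To verify $\psi=\alpha\circ\iota$, I would fix a smooth hyperresolution $\varepsilon_\mydot\col X_\mydot\to X$ as in \iref{defDB}(v). On each smooth component of $X_\mydot$ the ordinary de~Rham resolution $\bC_{X_\mydot}\qis\Omega^\mydot_{X_\mydot}$ sits in a tautological commutative triangle
$$
\xymatrix@R=1em@C=3em{
\bC_{X_\mydot} \ar[r] \ar[dr] & \sO_{X_\mydot}\ar@{=}[d] \\
& \Omega^0_{X_\mydot}
}
$$
whose diagonal factors the canonical inclusion through the identification $\sO_{X_\mydot}=\Omega^0_{X_\mydot}$. Applying $\myR\varepsilon_{\mydot *}$ converts the diagonal into $\bC_X\qis\FullDuBois{X}\onto\Om^0_X$ by part (v) of \iref{defDB}, and converts the horizontal--vertical composite into $\bC_X\to\sO_X\to\myR\varepsilon_{\mydot *}\sO_{X_\mydot}\qis\Om^0_X$.

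The only nontrivial bookkeeping point is that the map $\alpha$ singled out in \iref{item:dR-to-DB} agrees, under the identification $\Om^0_X\qis\myR\varepsilon_{\mydot *}\sO_{X_\mydot}$, with the adjunction morphism $\sO_X\to\myR\varepsilon_{\mydot *}\sO_{X_\mydot}$. This compatibility is built into the construction of $\FullDuBois{X}$ in \cite{DuBois81} and \cite{GNPP88}: $\alpha$ is defined on each simplicial (resp.\ cubic) stratum as the identity $\sO\to\Omega^0$, and these assemble into the global map. Once this is granted, $\psi$ and $\alpha\circ\iota$ are both identified with the same morphism $\bC_X\to\myR\varepsilon_{\mydot *}\sO_{X_\mydot}$ in $D(X)$, completing the verification.
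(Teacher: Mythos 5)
Your argument is correct and is essentially the paper's own (implicit) one: the Observation is stated without any proof beyond invoking the naturality of the map $\sO_X\to\Om^0_X$ from \eqref{defDB}, and your hyperresolution computation is the standard unwinding of exactly that compatibility. The one point you defer to the construction --- that this map is induced stratum-wise by the identity $\sO\to\Omega^0$ and hence agrees with the adjunction morphism $\sO_X\to\myR\varepsilon_{\mydot *}\sO_{X_\mydot}$ --- is indeed how it is defined in \cite{DuBois81} and \cite{GNPP88}, so nothing is missing.
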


\begin{defini}\label{def:db}
  A scheme $X$ is said to have \emph{Du~Bois singularities} (or \emph{DB
    singularities} for short) if the natural map $\sO_{X}\to \Om^0_X$ from
  \iref{item:dR-to-DB} is a quasi-isomorphism.
\end{defini}

\begin{rem}
  If $\varepsilon_\mydot : X_{\mydot} \rightarrow X$ is a hyperresolution of $X$ then
  $X$ has Du~Bois singularities if and only if the natural map $\sO_X \rightarrow
  \myR {\varepsilon_{\mydot}}_* \sO_{X_{\mydot}}$ is a quasi-isomorphism.
\end{rem}

\begin{example}
  It is easy to see that smooth points are Du~Bois and Deligne proved that normal
  crossing singularities are Du~Bois as well cf.\ \cite[Lemme 2(b)]{MR0376678}.
\end{example}

\section{The proof of {\bf\sf (\ref{thm:naive-db})}}

\equinsect

As observed in \eqref{rem:one-direction}, we only need to prove that if for every
$i>0$ and for every $L\subseteq X$ which is the intersection of general members of
$\frd$, the natural map
\begin{sequation}\label{eq:hodge-restriction}
  \nu_i:H^i(L, \sO_L) \to Gr^0_FH^i(L,\bC)
\end{sequation}
is an isomorphism, then $X$ has Du~Bois singularities.

\begin{obs}\label{obs:i=0}
  Note that it follows that $\nu_i$ is an isomorphism for all $i\in\bZ$. Indeed, both
  sides are zero for $i<0$ and have the same dimension for $i=0$. Since $\nu_i$ is
  surjective this implies the claim.
\end{obs}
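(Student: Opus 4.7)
The plan is to split into the cases $i<0$ and $i=0$, with the hypothesis taking care of $i>0$. The case $i<0$ is immediate because both sides vanish: $H^i(L,\sO_L)=0$ and $H^i(L,\bC)=0$ by the standard vanishing of cohomology in negative degrees, and in particular $Gr^0_FH^i(L,\bC)=0$. So $\nu_i$ is tautologically an isomorphism in that range.

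For $i=0$, I would first observe that $\nu_0$ is surjective: by \eqref{obs:natural-map} applied to $L$, the natural surjection $H^0(L,\bC)\onto Gr^0_FH^0(L,\bC)$ factors through $H^0(L,\sO_L)$, which forces $\nu_0$ itself to be surjective. It then suffices to check that both vector spaces have the same (finite) dimension. Since $L$ is an intersection of general members of a basepoint-free linear system on a reduced proper variety $X$, iterated Bertini guarantees that $L$ is itself reduced and proper, so $\dim_{\bC}H^0(L,\sO_L)$ equals the number of connected components of $L$. For the right-hand side I would invoke the Hodge-to-de~Rham spectral sequence $E_1^{pq}=\bH^q(L,\Om^p_L)\Rightarrow H^{p+q}(L,\bC)$ from \iref{item:Hodge}, which degenerates at $E_1$: the piece $F^1H^0(L,\bC)$ is assembled from terms $E_\infty^{p,-p}=\bH^{-p}(L,\Om^p_L)$ with $p\geq 1$, each of which vanishes because $h^j(\Om^p_L)=0$ for $j<0$ (Theorem~\ref{defDB}) and sheaf cohomology vanishes in negative degrees, so the hypercohomology spectral sequence gives $\bH^{-p}(L,\Om^p_L)=0$. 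Consequently $F^1H^0(L,\bC)=0$, so $Gr^0_FH^0(L,\bC)=H^0(L,\bC)$, whose dimension is also the number of connected components of $L$.

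Combining these observations, $\nu_0$ is a surjection between finite-dimensional vector spaces of equal dimension and is therefore an isomorphism; together with the $i<0$ case and the given hypothesis for $i>0$ this yields the claim for all $i\in\bZ$. There is no real obstacle here: the argument is a bookkeeping consequence of the degeneration in \iref{item:Hodge} combined with the cohomology-sheaf vanishing for $\Om^p_L$. The only mild subtlety is verifying that $L$ is genuinely reduced, which is automatic by Bertini applied to a basepoint-free linear system on the reduced variety $X$.
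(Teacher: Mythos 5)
Your argument is correct and follows exactly the route the paper intends: vanishing of both sides for $i<0$, equality of dimensions at $i=0$ (both equal to the number of connected components of $L$), and surjectivity of $\nu_i$ from \eqref{obs:natural-map} to conclude. You have merely spelled out the details that the paper's one-line proof leaves implicit, so there is nothing to add.
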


Let $\Sigma_X\subseteq X$ denote the locus of points where $X$ does not have Du~Bois
singularities, i.e., $\Sigma_X$ is the smallest closed subset of $X$ such that
$X\setminus \Sigma_X$ has Du~Bois singularities. We would like to prove that
$\Sigma_X=\emptyset$.

Let $H$ be a general member of $\frd$. Then $\Sigma_H=\Sigma_X\cap H$ by
\iref{item:db-cx-of-hyper-rel}.  As our goal is to prove that $\Sigma_X=\emptyset$,
we may replace $X$ with an intersection of general members of $\frd$ and assume that
$\Sigma_X$ is finite.

Consider the \emph{DB defect} of $X$ \cite[2.9]{Kovacs10a}, that is, the mapping cone
of the natural morphism $\sO_X\to \Om_X^0$. By definition there exists an exact
triangle, 
\begin{equation}
  \label{eq:db-defect}
  \xymatrix{%
    \sO_X \ar[r] & \Om^0_X \ar[r] & \Om^\times_X \ar[r]^-{+1} & ,
  }
\end{equation}
and by \eqref{obs:i=0} and \iref{item:Hodge},
$$
H^i(X,\sO_X)\isom \bH^i(X,\Om_X^0)
$$
is an isomorphism for all $i\in\bZ$. It follows that then
\begin{equation}
  \label{eq:coh-of-db-defect}
  \bH^i(X,\Om_X^\times)=0
\end{equation}
for all $i\in\bZ$.

On the other hand there exists a spectral sequence computing
$\bH^i(X,\Om_X^\times)$:
\begin{equation*}
  H^p(X,h^q(\Om_X^\times)) \Rightarrow \bH^{p+q}(X,\Om_X^\times).
\end{equation*}
Observe that $\supp h^q(\Om_X^\times)\subseteq \Sigma_X$ and hence
$0$-dimensional. Consequently 
$$
H^p(X,h^q(\Om_X^\times))=0
$$
for $p>0$, and hence
$$
\bH^{i}(X,\Om_X^\times)=H^0(X,h^i(\Om_X^\times))=h^i(\Om_X^\times)
$$
for all $i\in\bZ$.  Comparing with (\ref{eq:coh-of-db-defect}) we obtain that
$h^i(\Om_X^\times)=0$ for all $i\in\bZ$ and hence $\Om_X^\times\qis 0$. By the
definition of the DB defect this implies (cf.~(\ref{eq:db-defect})) that $X$ has
Du~Bois singularities.  
This proves \eqref{thm:naive-db} and by \eqref{rem:one-direction} that implies
\eqref{thm:numerical}. \qed

\begin{ack}
  The results in this paper were inspired by many conversations with J\'anos
  Koll\'ar, most recently while we both enjoyed the hospitality of the Research
  Institute for Mathematical Sciences at Kyoto University.

  I would also like to thank Karl Schwede and Zsolt Patakfalvi for insightful
  comments.
\end{ack}

\def\cprime{$'$} \def\polhk#1{\setbox0=\hbox{#1}{\ooalign{\hidewidth
  \lower1.5ex\hbox{`}\hidewidth\crcr\unhbox0}}} \def\cprime{$'$}
  \def\cprime{$'$} \def\cprime{$'$} \def\cprime{$'$}
  \def\polhk#1{\setbox0=\hbox{#1}{\ooalign{\hidewidth
  \lower1.5ex\hbox{`}\hidewidth\crcr\unhbox0}}} \def\cdprime{$''$}
  \def\cprime{$'$} \def\cprime{$'$} \def\cprime{$'$} \def\cprime{$'$}
\providecommand{\bysame}{\leavevmode\hbox to3em{\hrulefill}\thinspace}
\providecommand{\MR}{\relax\ifhmode\unskip\space\fi MR}
\providecommand{\MRhref}[2]{%
  \href{http://www.ams.org/mathscinet-getitem?mr=#1}{#2}
}
\providecommand{\href}[2]{#2}

\end{document}